\def\supertiny{\fontsize{4pt}{3pt}}
\author{{\bf Author: Maya Mohsin Ahmed} \\ Email: maya.ahmed@gmail.com }
\title{{\LARGE {\bf Two different scenarios when the  Collatz Conjecture fails }}}
\date{}
\newtheorem{thm}{Theorem}[section]
\newtheorem{lemma}{Lemma}[section]
\newtheorem{exm}{Example}[section]
\newtheorem{corollary}{Corollary}[section]
\begin{document}     
\maketitle           

\begin{abstract}

 In this article, we give two different proofs of why the Collatz Conjecture
 is false.

\end{abstract}

\section{Introduction.}

Given a positive integer $A$, construct the sequence $c_i$ as follows:
\[  
\begin{array}{llll}
c_i & = & A & \mbox{if $i=0$;} \\ & = & 3c_{i-1}+1 & \mbox{if
  $c_{i-1}$ is odd;} \\ & = & c_{i-1}/2 & \mbox{if $c_{i-1}$ is even.}
\end{array} 
\]

The sequence $c_i$ is called a {\em Collatz sequence} with {\em
  starting number} $A$. The {\em Collatz Conjecture} says that this
sequence will eventually reach the number 1, regardless of which
positive integer is chosen initially. The sequence gets in to an
infinite cycle of 4, 2, 1 after reaching 1.

\begin{exm} \label{Collatzeg} {\em 
The Collatz sequence of $911$ is:
\[
\begin{array}{l}

911, 2734, 1367, 4102, 2051, 6154,  3077, 9232,
4616, 2308, 1154, 577,\\ 1732, 866, 433, 1300, 650, 325, 976, 488, 244,
122, 61, 184, 92,  46, 23, \\ 70, 35, 106, 53, 160, 80, 40, 20, 10, 5, 16,
8, 4, 2, 1,4,2,1,4,2,1, \dots
\end{array}
\]
} \end{exm}

For the rest of this article, we will ignore the infinite cycle of
$4,2,1$, and say that a Collatz sequence {\em converges to $1$}, if it
reaches $1$.  A comprehensive study of the Collatz Conjecture can be
found in \cite{lag1}, \cite{lag2}, and \cite{wir}.

In this article, as we did before in \cite{ahmed1}, \cite{ahmed2}, and
\cite{ahmed3}, we focus on the subsequence of odd numbers of a Collatz
sequence. This is because every even number in a Collatz sequence has
to reach an odd number after a finite number of steps.  Observe that
the Collatz Conjecture implies that the subsequence of odd numbers of
a Collatz sequence converges to $1$.

In Section \ref{proofsection}, we use networks of Collatz sequences
\cite{ahmed2} to prove that the Collatz Conjecture fails. In Section
\ref{doNotConvergeSection}, we use the notion of Reverse Collatz
sequences \cite{ahmed1} to give another proof of the collapse of the
Collatz Conjecture.


\section{Networking to prove that the Collatz Conjecture is false.} \label{proofsection}

In this section, we use an array of Collatz sequences to demonstrate
how the Collatz Conjecture fails.  In \cite{ahmed2}, we proved the
following theorem which showed that the Collatz sequence of an odd
number $A$ merges either with the Collatz sequence of $(A-1)/2$ or
$2A+1$.

\begin{thm} \label{timefor2plythm} (Theorem 2.1, \cite{ahmed2}) 
Let $N$  be an odd  number. Let $n_0  = N$, $m_0 =  2n_0+1 =
2N+1$, and $l_0 = 2m_0+1$ = 4N+3 . Let $n_i, m_i$, and $l_i$
denote  the  subsequence  of  odd  numbers  in  the  Collatz
sequence of  $n_0, m_0,$ and $l_0$,  respectively. Then, for
some integer $r$, $n_{r+1}  = (3n_{r}+1)/2^k$ such that $k >
1$. Let $r$  be the smallest such integer.  Then, $m_{r+2} =
(3m_{r+1}+1)/2^j$ for some $j > 1$, and

\[ \begin{array}{l} 
m_i =  2n_i+1, \mbox{ for }  i \leq r, \\
 m_{{r}+1} = 2^kn_{{r}+1}+1 \\
l_i = 2m_i+1 \mbox{ for } i \leq r +1, \\
l_{r +2} = 2^jm_{r +2}+1
\end{array}
\]

If $k=2$, then $m_i = n_i$ for $i > r +1$. Otherwise, if
$k>2$ then $l_{r +2} = 4m_{r +2}+1$ and $l_i = m_i$ for $i > r+2$.
\end{thm}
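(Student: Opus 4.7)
The plan is to proceed by induction on $i$, where at each step the claimed relationship between the three sequences is verified by a direct parity/power-of-two computation. The governing identity is: if $m$ and $n$ are odd with $m = 2n+1$, then $3m+1 = 6n+4 = 2(3n+2)$, and since $3n+2$ is odd, the next odd term in the Collatz sequence starting from $m$ is exactly $3n+2$. Meanwhile, from $n$ the next odd term is $n' = (3n+1)/2^{k}$ where $2^k$ is the highest power of $2$ dividing $3n+1$. The relation $2n'+1 = 3n+2$ is equivalent to $2(3n+1)/2^{k} = 3n+1$, i.e. $k=1$. So the relation $m_{i+1} = 2n_{i+1}+1$ propagates for exactly as long as every step in the $n$-sequence strips off a single factor of $2$, which by definition is precisely the range $i \leq r$.

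First I would dispatch the base case from the definitions and then run the induction up to index $r$, using the observation above to get $m_i = 2n_i+1$ for $i \leq r$, and a parallel computation (with $l$ playing the role of $m$, and $m$ the role of $n$) to get $l_i = 2m_i+1$ for $i \leq r+1$. The key point is that since $m_i = 2n_i+1$ for $i \leq r$ forces $k_i = 1$ in the $n$-sequence for $i < r$, the corresponding step in the $m$-sequence also strips off exactly one factor of $2$, so the recursion for $l$ picks up one additional step of validity.

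Next I would treat the transition at index $r$ explicitly. Since $m_r = 2n_r+1$, the calculation $3m_r+1 = 2(3n_r+2) = 2(2^k n_{r+1} + 1)$ and the oddness of $2^k n_{r+1} + 1$ gives $m_{r+1} = 2^k n_{r+1}+1$; the analogous computation yields $l_{r+1} = 2m_{r+1}+1$.

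The main obstacle, and where careful bookkeeping is needed, is analyzing what happens at the next step of the $m$-sequence and splitting the two cases. From $m_{r+1} = 2^k n_{r+1}+1$ one obtains
\[
3m_{r+1}+1 \;=\; 3 \cdot 2^k n_{r+1} + 4 \;=\; 4\bigl(3 \cdot 2^{k-2} n_{r+1} + 1\bigr),
\]
which is legitimate because $k \geq 2$. If $k = 2$, the parenthesized factor is $3n_{r+1}+1$, which is even (as $n_{r+1}$ is odd), so the division by $2$ continues in the same pattern as for the $n$-sequence, and peeling off the extra $2^2$ shows $m_{r+2} = (3n_{r+1}+1)/2^{k'} = n_{r+2}$; determinism of the Collatz map then gives $m_i = n_i$ for all $i > r+1$. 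If $k > 2$, the parenthesized factor is odd, so $j = 2$ and $m_{r+2} = 3 \cdot 2^{k-2}n_{r+1} + 1$. Repeating the same $m = 2n+1$ calculation one level up gives $l_{r+2} = 2^j m_{r+2} + 1 = 4 m_{r+2}+1$, and then applying the $k=2$ argument to the pair $(m,l)$ at index $r+2$ shows that $l_{r+3} = m_{r+3}$, hence $l_i = m_i$ for all $i > r+2$. This last doubling of the argument, recognizing that the $k>2$ case for $(n,m)$ is structurally the $k=2$ case one level up for $(m,l)$, is the step I expect to be the trickiest to present cleanly.
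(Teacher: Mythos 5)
The paper does not actually prove this statement: it is imported verbatim as Theorem 2.1 of \cite{ahmed2} and used as a black box, so there is no in-paper proof to compare yours against. Judged on its own, your argument is correct and complete on the structural claims. The governing identity (if $m=2n+1$ with $n$ odd then $3m+1=2(3n+2)$ with $3n+2$ odd, so the next odd term after $m$ is $3n+2=2^k n'+1$ where $n'=(3n+1)/2^k$) is exactly the right engine; it propagates $m_i=2n_i+1$ while $k=1$, produces $m_{r+1}=2^k n_{r+1}+1$ at the first $k>1$ step, gives the $l$-sequence one extra step of validity because the $m$-sequence still has a $k=1$ step at index $r$, and your factorization $3m_{r+1}+1=4\bigl(3\cdot 2^{k-2}n_{r+1}+1\bigr)$ correctly splits the two cases ($k=2$ gives $j\geq 3$ and $m_{r+2}=n_{r+2}$; $k>2$ gives $j=2$, $l_{r+2}=4m_{r+2}+1$, and the merge one level up). The one small omission is the existence claim ``for some integer $r$, $n_{r+1}=(3n_r+1)/2^k$ with $k>1$,'' which is part of the theorem's assertion and which you take as given. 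It needs a one-line argument: if every step had $k=1$ then $n_i+1=(3/2)^i(n_0+1)$, forcing $2^i\mid n_0+1$ for all $i$, which is impossible (equivalently, Theorem \ref{tailsthm} says the tail length strictly decreases until it hits $0$, where $k\geq 2$). Adding that sentence makes the proof complete.
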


For some integer $u_0$, consider the sequence $u_i = 2u_{i-1}+1$, for
$i \geq 1$. Let $v_{j,i}$ denote the sequence of odd integers in the
Collatz sequence of $u_i$.  Theorem \ref{timefor2plythm} tells us that
for every $i > 0$, there is some $r$ such that for $j \leq r$,
$v_{j,i} = 2v_{j,i-1}+1$ and $v_{r+1,i} = 2^kv_{r,i-1}+1$ where
$k>1$. This fact motivates us to construct the array of Collatz
sequences in Theorem \ref{networkthm}.

Let $A$ be an odd
number. Write $A$ in its binary form,
\begin{align*}  \begin{array}{lllllllllll}
A &=& 2^{i_1} + 2^{i_2} + \cdots + 2^{i_m}+ 2^n+2^{n-1} + 2^{n-2} + \cdots + 2^2+ 2+1,\\ 
& & \hfill  \mbox{such that } i_1 > i_2 > \cdots > i_m > n+1.
\end{array}
\end{align*} 
 The {\em tail} of $A$ 
is defined as $2^n+2^{n-1} + 2^{n-2} + \cdots + 2^2+ 2+1$. We call $n$ the {\em
  length of the tail} of $A$ (See \cite{ahmed1}). 

\begin{exm}{\em
The tail of $27 = 2^4 + 2^3 + 2 + 1$ is $2+1$ and hence has length $1$,
the tail of $161 = 2^7 + 2^5 + 1$ is $1$ and hence is of length $0$,
and the tail of $31 = 2^4 + 2^3 + 2^2 + 2 + 1$ is the entire number
$2^4 + 2^3 + 2^2 + 2 + 1$ and therefore has length $4$.  }
\end{exm}

 In \cite{ahmed1} we proved the following theorem about the odd
 numbers in a Collatz sequence.
\begin{thm}  \label{tailsthm} (Theorem 2 \cite{ahmed1})
Let $A$ be an odd number and let $n$ denote the length of the tail of
$A$.  Let $a_i$ denote the sequence of odd numbers in the Collatz
sequence of $A$ with $a_0 = A$.
\begin{enumerate}

\item \label{tailis0thmpart} If $n \geq 1$, then  for some 
$i_1>i_2 > \cdots > i_m > n+1$,
\[ a_i = \frac{3a_{i-1}+1}{2} = \frac{3^i}{2^i} (2^{i_1} +
  2^{i_2} + \cdots + 2^{i_m}+2^{n+1}) -1, \mbox{ for } i=1, \dots
  n.
\]
 The length of the tail of $a_i$ is $n-i$. Hence the length of the
 tail of the $n$-th odd number after $A$ is $0$.

\item If $n=0$, then 
\[ a_1  = \frac{3A+1}{2^k}, k \geq 2. \]
\end{enumerate}
 
\end{thm}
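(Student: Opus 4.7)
The plan is to handle the two parts separately, with Part 2 serving as the easy base case and Part 1 proved by induction on $i$. The central algebraic trick is to rewrite the tail: since the tail $2^n+2^{n-1}+\cdots+2+1$ equals $2^{n+1}-1$, we can set
\[
B \;=\; 2^{i_1}+2^{i_2}+\cdots+2^{i_m}+2^{n+1},
\]
so that $A = B-1$. Because $i_m > n+1$, we have $v_2(B)=n+1$, i.e.\ $B = 2^{n+1}D$ with $D$ odd. This one reformulation is what makes the rest of the argument a straightforward calculation.

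For Part 2, I would argue by a single congruence. A tail of length $0$ means the second-lowest bit of $A$ is $0$, so $A\equiv 1\pmod{4}$. Then $3A+1\equiv 4\equiv 0\pmod{4}$, so $2^2$ divides $3A+1$, which yields $a_1=(3A+1)/2^k$ with $k\ge 2$. For Part 1, I would first show the base case $i=1$: a tail of length $n\ge 1$ forces $A\equiv 3\pmod 4$, so $3A+1\equiv 2\pmod 4$, giving that $(3A+1)/2$ is odd, hence $a_1=(3A+1)/2$. Substituting $A=B-1$,
\[
a_1 \;=\; \frac{3(B-1)+1}{2} \;=\; \frac{3B-2}{2} \;=\; \frac{3}{2}B-1,
\]
which is exactly the claimed formula with $i=1$.

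For the inductive step, assume $a_{i-1}=(3^{i-1}/2^{i-1})B-1$ and $i\le n$. I first need to confirm $a_{i-1}$ is odd and its tail has length $n-(i-1)\ge 1$, so that the next Collatz odd step is again simply $(3x+1)/2$. Writing $(3^{i-1}/2^{i-1})B = 3^{i-1}\cdot 2^{n+2-i}D$ (an even integer since $n+2-i\ge 2$), we see $a_{i-1}$ is odd. More precisely, $a_{i-1}+1 = 3^{i-1}\cdot 2^{n+2-i}D$ has $2$-adic valuation exactly $n+2-i$; splitting off one factor of $2$ writes $a_{i-1}+1 = 2^{n+2-i}\cdot(\text{odd})$, so
\[
a_{i-1} \;=\; 2^{n+2-i}\cdot(2D'+1) - 1 \;=\; 2^{n+3-i}D' + \bigl(2^{n+2-i}-1\bigr),
\]
which exhibits the tail $2^{n+1-i}+\cdots+2+1$ of length $n-(i-1)$, confirming the inductive hypothesis on tail length. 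In particular $a_{i-1}\equiv 3\pmod 4$ when $i\le n$, so $a_i=(3a_{i-1}+1)/2 = (3^i/2^i)B-1$ by the same algebra as the base case. Finally, at $i=n$ the formula gives $a_n$ with tail of length $0$, which is exactly the transition case into Part 2.

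The only real obstacle is bookkeeping on the $2$-adic valuation: one must check that $(3^i/2^i)B$ is an integer (true iff $i\le n+1$, since $v_2(B)=n+1$) and that the binary expansion of $a_i$ really ends in exactly $n-i$ ones followed by a zero. Both facts follow cleanly from the identity $a_i+1 = 3^i\cdot 2^{n+1-i}D$ with $D$ odd and $\gcd(3,2)=1$, so no deeper number theory is required beyond tracking the lowest bits.
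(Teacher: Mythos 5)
Your proof is correct. Note that this paper does not actually prove Theorem \ref{tailsthm}; it is imported verbatim from \cite{ahmed1} as a quoted result, so there is no in-paper argument to compare against. Your reformulation $A = B-1$ with $B = 2^{i_1}+\cdots+2^{i_m}+2^{n+1}$ and $v_2(B)=n+1$ is exactly the right lens: it turns the statement into the single identity $a_i + 1 = 3^i\,2^{\,n+1-i}D$ with $D$ odd, from which everything follows --- the integrality of $(3^i/2^i)B$ for $i\le n+1$, the congruence $a_{i-1}\equiv 3\pmod 4$ for $i\le n$ (which justifies that the next odd term is $(3a_{i-1}+1)/2$ rather than a deeper division by $2$), the tail length $n-i$ of $a_i$ read off from $v_2(a_i+1)-1$, and the terminal case $v_2(a_n+1)=1$ giving tail length $0$. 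The base cases via $A\equiv 1\pmod 4$ (tail length $0$) and $A\equiv 3\pmod 4$ (tail length $\ge 1$) are also handled correctly, including the degenerate case $m=0$, $B=2^{n+1}$. This is a complete, self-contained proof of the cited theorem, which is arguably a service to the reader since the paper leans on this result (e.g.\ in Corollary \ref{nextTermCollatzcorol}) without reproducing its justification.
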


\begin{corollary} \label{nextTermCollatzcorol}
Let $A$ be an odd number. If $A \not \equiv 1$  mod $4$, then the next odd term in the Collatz sequence of $A$ is $(3A+1)/2$.
\end{corollary}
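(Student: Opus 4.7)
The plan is to reduce this immediately to the first part of Theorem \ref{tailsthm}. Since $A$ is odd, its residue mod $4$ is either $1$ or $3$; the hypothesis $A \not\equiv 1 \pmod 4$ therefore forces $A \equiv 3 \pmod 4$.

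Next I would translate this congruence into a statement about the binary tail of $A$ in the sense defined just before Theorem \ref{tailsthm}. Writing $A$ in binary, $A \equiv 3 \pmod 4$ means the two lowest bits of $A$ are both $1$, i.e., $A = \cdots + 2 + 1$, so the tail $2^n + 2^{n-1} + \cdots + 2 + 1$ has length $n \geq 1$.

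With $n \geq 1$ established, I would invoke part \ref{tailis0thmpart} of Theorem \ref{tailsthm} with $i = 1$, which directly gives $a_1 = (3a_0+1)/2 = (3A+1)/2$ as the next odd term in the Collatz sequence of $A$. Since the only content of the corollary is this identity, the proof is complete.

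The only potential subtlety, and what I would be most careful to spell out, is the equivalence between the condition $A \not\equiv 1 \pmod 4$ (for $A$ odd) and the condition ``the tail length of $A$ is at least $1$,'' because Theorem \ref{tailsthm} is phrased in the binary-tail language while the corollary is phrased as a congruence. Once this translation is noted, the corollary is an immediate specialization of the earlier theorem and requires no further computation.
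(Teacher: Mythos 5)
Your proposal is correct and follows exactly the paper's own route: observe that $A$ odd with $A \not\equiv 1 \pmod 4$ forces $A \equiv 3 \pmod 4$, translate this into the statement that the tail length $n$ is at least $1$, and then apply Part \ref{tailis0thmpart} of Theorem \ref{tailsthm} with $i=1$. The only difference is that you spell out the binary-digit justification of the tail-length claim, which the paper leaves implicit.
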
 

\begin{proof}

Since $A$ is odd and $A \not \equiv 1$ mod $4$, $A \equiv 3$ mod
$4$. This implies that the tail of $A$ has length greater than
zero. Hence the proof follows from Part \ref{tailis0thmpart} of
Theorem \ref{tailsthm}.

\end{proof}

\begin{thm} \label{networkthm} (Theorem 5.1, \cite{ahmed2})
For $n  \not \equiv  1$ mod $3$,  define a diagonal  array as
follows. Let  $u_0 =  4n+1$ and $u_i  = 2u_{i-1}+1$,  for $i
\geq 1$. For $j \geq  0$, let $v_{0,j}=u_j$, and for $k \geq
1$, let $v_{k,k} = 3v_{k-1,k-1}+2$. Finally, for $j >i$, let
$v_{i,j} = 2v_{i,j-1}+1$. We get an array
 \[\begin{array}{lllllllllllllllll}
u_0 & u_1 & u_2 &u_3 & u_4 & u_5 &  u_6 & u_7 & \dots \\
& v_{1,1} & v_{1,2} & v_{1,3} & v_{1,4} & v_{1,5} & v_{1,6} & v_{1,7} & \dots \\
&& v_{2,2} & v_{2,3} &  v_{2,4} & v_{2,5} & v_{2,6} & v_{2,7} & \dots \\
&&& v_{3,3} & v_{3,4} &  v_{3,5} & v_{3,6} & v_{3,7} &  \dots \\
&&&& \dots & \dots  \\

\end{array}
\]
with the following properties:

\begin{enumerate}

\item \label{partu_0multi3}  $u_k \not  \equiv 2$  mod $3$  for all  $k \geq 0$, whereas,
  $v_{i,j} \equiv 2$ mod $3$, if $i>0$ and $j>0$.

\item \label{partnotjump} $u_0 \equiv 1$ mod  $4$ and $v_{i,i} \equiv 1$ mod $4$
  for all  $i$. $u_i  \not \equiv 1$  mod $4$ for  $i>0$ and
  $v_{i,j} \not \equiv 1$ mod $4$ if $i \not = j$.

\item For $j \geq 1$, the $j$-th column is the first few odd
  numbers in the Collatz sequence of $u_j$.

\item  For  $j \geq  i >0$,
   $v_{i,j} = 3v_{i-1,j-1}+2$.

\end{enumerate}

\end{thm}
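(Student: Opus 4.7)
The plan is to prove the four properties in the order (4), (1), (2), (3), since (3) relies on both the recurrence of part (4) and the mod $4$ information of part (2), and part (2) in turn is most cleanly established once one knows that both the horizontal and the diagonal recurrences preserve oddness.

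First I would establish part (4) by a single induction on $j-i$ with $i>0$ fixed. The base case $j=i$ is the defining diagonal relation $v_{i,i}=3v_{i-1,i-1}+2$. For the inductive step one computes
\[
v_{i,j}=2v_{i,j-1}+1=2\bigl(3v_{i-1,j-2}+2\bigr)+1=6v_{i-1,j-2}+5,
\]
and one verifies separately that $3v_{i-1,j-1}+2=3(2v_{i-1,j-2}+1)+2=6v_{i-1,j-2}+5$. So the two expressions agree. This step is essentially algebraic bookkeeping; no obstacle.

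Next I would dispatch parts (1) and (2) by straightforward induction. For (1), reducing $u_i=2u_{i-1}+1$ modulo $3$ gives $u_i\equiv -u_{i-1}+1$, so the residue $2$ is absorbing while $0,1$ cycle between themselves; since $u_0=4n+1\equiv n+1\pmod 3$ and the hypothesis $n\not\equiv 1\pmod 3$ keeps $u_0$ out of the absorbing state, no $u_k$ ever reaches $2$ mod $3$. For the $v_{i,j}$ with $i>0$, the diagonal entry satisfies $v_{i,i}=3v_{i-1,i-1}+2\equiv 2\pmod 3$, and $v_{i,j}=2v_{i,j-1}+1\equiv 2\cdot 2+1\equiv 2\pmod 3$ propagates this residue along the row. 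For (2), $u_0=4n+1\equiv 1\pmod 4$, while for $i\ge 1$ the odd integer $u_{i-1}$ makes $u_i=2u_{i-1}+1\equiv 3\pmod 4$; likewise $v_{i,j}=2v_{i,j-1}+1\equiv 3\pmod 4$ for $j>i$, and the diagonal induction $v_{i,i}=3v_{i-1,i-1}+2\equiv 3\cdot 1+2\equiv 1\pmod 4$ closes the last case.

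Finally, part (3) is where I would use Corollary~\ref{nextTermCollatzcorol}. Combining the row recurrence $v_{i-1,j}=2v_{i-1,j-1}+1$ with part (4) gives
\[
v_{i,j}=3v_{i-1,j-1}+2=\frac{3v_{i-1,j}+1}{2},
\]
for $1\le i\le j$. Part (2) says $v_{i-1,j}\not\equiv 1\pmod 4$ whenever $i-1\ne j$, i.e.\ whenever $i\le j$, so the corollary applies and identifies $v_{i,j}$ as the odd term in the Collatz sequence immediately following $v_{i-1,j}$. Iterating down column $j$ from $v_{0,j}=u_j$ to $v_{j,j}$ therefore recovers the first $j+1$ odd entries of the Collatz sequence of $u_j$, as claimed.

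The one place I expect to need care is the interface between parts (2) and (3): the induction must cover the diagonal entry $v_{j,j}\equiv 1\pmod 4$ without trying to apply the corollary to it (since at that point the Collatz sequence may skip by more than one power of $2$), so I would phrase part (3) only for $0\le i\le j$ and note that the column terminates at the diagonal precisely because that is the first entry whose tail has length $0$.
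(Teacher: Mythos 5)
Your proposal is correct, and for the substantive parts it takes a genuinely different route from the paper. Parts (1) and (2) are handled exactly as in the paper (the same residue inductions mod $3$ and mod $4$), but you invert the logical order of parts (3) and (4). The paper proves part (3) first by a direct computation: it writes $v_{i,j}=2^{j-i}v_{i,i}+z_{j-i-1}$ with $z_k=2^{k+1}-1$, expands $(3v_{i,j}+1)/2$, and matches it term by term against $v_{i+1,j}=2^{j-i-1}(3v_{i,i}+2)+z_{j-i-2}$; it then deduces part (4) from part (3) together with Corollary~\ref{nextTermCollatzcorol}. You instead prove part (4) first by a self-contained induction on $j-i$ using only the defining recurrences ($2(3v_{i-1,j-2}+2)+1=6v_{i-1,j-2}+5=3(2v_{i-1,j-2}+1)+2$), and then obtain part (3) by combining part (4) with the row recurrence to get $v_{i,j}=3\cdot\frac{v_{i-1,j}-1}{2}+2=\frac{3v_{i-1,j}+1}{2}$, at which point Corollary~\ref{nextTermCollatzcorol} (justified by part (2), since $v_{i-1,j}\not\equiv 1\bmod 4$ for $i-1<j$) identifies $v_{i,j}$ as the next odd Collatz term. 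Your decomposition buys a shorter and more transparent argument: part (4) becomes a purely algebraic fact independent of any Collatz machinery, and the geometric-series bookkeeping with $z_k$ disappears entirely; the paper's version has the mild advantage of exhibiting the explicit closed form $v_{i,j}=2^{j-i}v_{i,i}+2^{j-i}-1$ along the way. Your closing caution about not applying the corollary to the diagonal entry $v_{j,j}\equiv 1\bmod 4$ matches the paper's remark that the odd numbers after $v_{i,i}$ are not computed, and is handled correctly by restricting the column claim to $0\le i\le j$.
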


\begin{proof}
\begin{enumerate}

\item Since $n \not \equiv 1$ mod $3$, $u_0=4n+1 \not \equiv
  2$ mod $3$. If $u_i  \not \equiv 2$ mod $3$, then $u_{i+1}
  = 2u_i+1  \not \equiv 2$ mod $3$.  Consequently, $u_k \not
  \equiv  2$ mod  $3$  for all  $k  \geq 0$.  

For $i >  0$, $v_{i,i} \equiv 2$ mod  $3$, by definition. If
$v_{i,j}  \equiv 2$  mod $3$,  then $v_{i,j+1}  = 2v_{i,j}+1
\equiv 2$ mod $3$. Thus,  it follows that $v_{i,j} \equiv 2$
mod $3$, if $i>0$ and $j>0$.

\item    $u_0   \equiv    1$   mod    $4$    by   definition.
  $v_{i,i}=3v_{i-1,i-1}+2 \equiv 1$  mod $4$, if $v_{i-1,i-1}
  \equiv 1$ mod $4$. Since  $v_{0,0} = u_0$, it follows that
  $v_{i,i} \equiv 1$ mod $4$ for $i \geq 0$. For $i>0$, $u_i
  =2u_{i-1}+1 \not  \equiv 1$ mod $4$ since  $u_{i-1}$ is an
  odd number. Similarly,  $v_{i,j} =2v_{i,j-1}+1 \not \equiv
  1$ mod $4$, when $i \not = j$.

\item When $j \geq 1$, $v_{0,j}=u_j$. We know from Part
  \ref{partnotjump} that $v_{i,j} \equiv 3$ mod $4$ if $i \not = j$.
  Therefore, by Corollary \ref{nextTermCollatzcorol}, the odd number
  that comes after $v_{i,j}$ in the Collatz sequence of $v_{i,j}$ is
  $(3v_{i,j}+1)/2$. We do not compute the odd numbers that come after
  $v_{i,i}$.  Hence, assume that $j>i$.

For $k  \geq 1$, let $z_k$  be defined as follows:  \[ z_k =
2^k + 2^{k-1}  + 2^{k-2} + \cdots +  2+1 = \sum_{i=0}^k2^i =
\frac{2^{k+1}-1}{2-1} = 2^{k+1}-1.\]
Then, by definition, for $j >i$,  $v_{i,j} = 2^{j-i}v_{ii}+z_{j-i-1}$, and
\[
v_{i+1,j} = 2^{j-i-1}v_{i+1,i+1}+z_{j-i-2} = 2^{j-i-1}(3v_{i,i}+2) +z_{j-i-2}.
\]
\[
\frac{3v_{i,j}+1}{2} = \frac{3\times2^{j-i}v_{ii}+3\times z_{j-i-1}+1}{2} = 
3\times2^{j-i-1}v_{ii}+ \frac{3z_{j-i-1}+1}{2}.\]

\[\frac{3z_{j-i-1}+1}{2} = \frac{3(2^{j-i}-1)+1}{2} =  3\times2^{j-i-1}-1 =
2 \times 2^{j-i-1} + (2^{j-i-1}-1) = 2^{j-i}+z_{j-i-2}.\]

Consequently,
\[
\frac{3v_{i,j}+1}{2} = v_{i+1,j}.\] This implies $v_{i+1,j}$
is the odd number that  comes after $v_{i,j}$ in the Collatz
sequence  of $v_{i,j}$. Hence,  for $j  \geq 1$,  the $j$-th
column is the first few  odd numbers in the Collatz sequence
of $u_j$.

\item For $j \geq i >0$, when $i=j$, we have that $v_{i,j} =
  3v_{i-1,j-1}+2$, by definition. Let $i \not = j$, then $v_{i,j} =
  2v_{i,j-1}+1$. But $v_{i-1,j-1} \not \equiv 1$ mod $4$, when $i \not
  = j$.  Hence, by Corollary \ref{nextTermCollatzcorol}, $v_{i,j-1} =
  (3v_{i-1,j-1}+1)/2$.  Consequently, $v_{i,j} = 3v_{i-1,j-1}+2$.

\end{enumerate}
\end{proof}

Theorem \ref{networkthm} can be used to construct divergent Collatz
sequences as shown in the example below. Observe that the Collatz
sequence of $u_i$ is strictly increasing till $v_{i,i}$. Our choice of
$u_0$ and Theorem \ref{timefor2plythm} makes sure this is the
case. Check that $v_{i+1,i}$ will be smaller than $v_{i,i}$. We see
that as $i$ increases, a subsequence of the Collatz sequence of $u_i$
is increasing indefinitely.  Thus creating divergent Collatz
sequences. This implies that the Collatz Conjecture is
false. Different values of $n$ provide different arrays that lead to
different divergent Collatz sequences.

\begin{exm}{\em Let $u_i$ and $v_{i,j}$ be defined as in Theorem  \ref{networkthm}. 

$n=0:$

\[\begin{array}{lllllllllllllllllllllllllllllllllllllllll} 
u_i:  & 1 & 3 & 7 & 15 & 31 & 63 & 127 & 255 & 511 & 1023 & 2047 & 4095 & 8191  \\ 
v_{1,i}:& &5 & 11  & 23 & 47 &  95 & 191 & 383 &767  & 1535 &3071 & 6143 & 12287 \\ 
v_{2,i}: & & & 17 & 35 &71 & 143 & 287 &  575 & 1151 & 2303 &4607 & 9215  & 18431\\ 
v_{3,i}: && & &  53 & 107 & 215 & 431 & 863 & 1727 & 3455 & 6911 & 13823 & 27647\\
v_{4,i}: && & & & 161 & 323 & 647 & 1295 & 2591 & 5183 & 10367 &20735 &41471 \\
v_{5,i}&&&&&& 485 & 971 & 1943 & 3887 & 7775 &15551 &31103 &62207 \\
v_{6,i}: &&&&&&& 1457 & 2915 & 5831 &11663 & 23327 &46655 &93311 \\
v_{7,i}: &&&&&&&& 4373 & 8747& 17495 & 34991&  69983 & 139967 \\
\end{array}
\]

$n=3:$
\[\begin{array}{lllllllllllllllllllllllllllllllllllllllll} 

u_i: & 13 &    27 & 55 & 111 & 223 & 447&895& 1791&3583& 7167&14335&28671 \\
v_{1,i}: &  & 41 & 83 & 167 & 335 & 671 & 1343 & 2687 & 5375 & 10751 & 21503 & 43007\\ 
v_{2,i}: & & &  125 & 251 & 503 & 1007 & 2015 & 4031 & 8063 & 16127 & 32255 & 64511 \\
v_{3,i}:&  & &  & 377 & 755 & 1511 &  3023 & 6047 &  12095 & 24191 &  48383 & 96767 \\
v_{4,i}: & &&&&1133 & 2267 & 4535 & 9071 & 18143 & 36287 &  72575 &  145151 \\
v_{5,i}: &  &&&&&   3401 &    6803 & 13607 &  27215 &  54431 &  108863 & 217727 \\ 
v_{6,i}: & &&&&&& 10205& 20411& 40823&81647&163295&326591

\end{array}
\]
}  \end{exm}

\section{Reversing to prove that the Collatz Conjecture is false} \label{doNotConvergeSection}
In this section, we provide a different proof of how the Collatz
Conjecture fails.  

Let $A$ be an odd integer. We say $A$ is a {\em jump}, if $A=4n+1$
from some odd number $n$. If $A = 4^i \times P + 4^{i-1}+4^{i-2} +
\cdots+4 + 1$, such that $i \geq 1$ and $P$ is an odd number, then we
say $A$ is a {\em jump from $P$ of height $i$}.
\begin{exm} {\em
$13 = 4 \times 3 + 1$ is a jump from $3$ of height $1$. $53 = 4 \times
    13 +1 = 4^2 \times 3 +4 + 1$ is a jump from $13$ of height $1$ and
    a jump from $3$ of height $2$.  }
\end{exm}

Jumps are studied in great detail in \cite{ahmed1} and
\cite{ahmed2}. We say two Collatz sequences are {\em equivalent} if
the second odd number occurring in the sequences are same.

\begin{exm} {\em
The Collatz sequence of 3 is
\[
3, 10, 5, 16, 8, 4, 2, 1, 1, \dots \]
The Collatz sequence of 13 is
\[
13, 40, 20, 10, 5, 16, 8, 4, 2, 1, 1, \dots 
\]
Observe that the two sequences merge at the odd number $5$.  Hence the
Collatz sequences of $3$ and $13$ are equivalent.  }\end{exm}

\begin{lemma}[Corollary 2, Section 2, \cite{ahmed1}] \label{lemmajump}
Let $A$ be an odd number and let $c_0=A$ and $c_i = 4c_{i-1}+1$, that
is, $c_i$ are jumps from $A$. Then, for any $i$, the Collatz sequence
of $A$ and $c_i$ are equivalent.
\end{lemma}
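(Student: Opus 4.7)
My plan is to show directly that the second odd term in the Collatz sequence of $c_i$ equals the second odd term in the Collatz sequence of $A$. Because both sequences begin with a different odd number ($c_i$ vs.\ $A$), "equivalent" here is controlled entirely by what happens after one application of the odd-number Collatz step.

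First I would get a closed form for $c_i$ by unrolling the recursion: since $c_i=4c_{i-1}+1$ with $c_0=A$, induction gives
\[
c_i \;=\; 4^i A \;+\; 4^{i-1}+4^{i-2}+\cdots+4+1 \;=\; 4^iA + \frac{4^i-1}{3}.
\]
This matches the description of $c_i$ as "a jump from $A$ of height $i$" used in the surrounding text, so it is the right normal form.

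Next I would compute $3c_i+1$ to see how it compares to $3A+1$:
\[
3c_i+1 \;=\; 3\cdot 4^i A + (4^i-1) + 1 \;=\; 4^i(3A+1).
\]
Since $A$ is odd, $3A+1$ is even; write $3A+1=2^k m$ with $m$ odd and $k\ge 1$. Then the next odd term in the Collatz sequence of $A$ is $m=(3A+1)/2^k$. For $c_i$, the same identity gives $3c_i+1=2^{2i+k}m$, so the next odd term in the Collatz sequence of $c_i$ is also $m$. Hence the second odd entries of the two sequences agree, which by definition is equivalence.

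I do not foresee any serious obstacle: the only thing to verify carefully is the geometric-sum identity for $c_i$ and the factorization $3c_i+1=4^i(3A+1)$, and then the rest is just reading off the next odd term under the $2$-adic valuation. One should note that this argument is independent of the exact value of $k$ (in particular, whether $A\equiv 1\pmod 4$ or not), because multiplying by $4^i$ only inflates the power of $2$ in $3c_i+1$ without altering its odd part $m$.
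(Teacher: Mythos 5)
Your proof is correct and complete. The paper itself does not prove this lemma --- it only cites it from \cite{ahmed1} --- so there is no internal argument to compare against; your derivation supplies a self-contained justification. The key identity $3c_i+1=4^i(3A+1)$ is exactly the right mechanism: multiplying by $4^i$ changes only the power of $2$ in $3c_i+1$, not its odd part, so the second odd term of the Collatz sequence of $c_i$ coincides with that of $A$, which is precisely the paper's definition of equivalence.
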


The Reverse Collatz sequence, $r_i$, of a positive integer $A$ was
 defined in \cite{ahmed1} as follows.

 \[
r_i = \left \{  \begin{array}{llllllll} A  & \mbox{ if $i = 0$; } \\

 \frac{r_{i-1}-1}{3} & \mbox{ if $r_{i-1} \equiv 1$ mod $3$ and $r_{i-1}$ is
  even}; \\ 
2r_{i-1} &  \mbox{ if $r_{i-1} \not \equiv 1$ mod $3$ and
    $r_{i-1}$ is even;} \\ 2r_{i-1} &  \mbox{ if $r_{i-1}$ is odd}.
\end{array}
\right .
\]

We say that a Reverse Collatz sequence {\em converges} if the
 subsequence of odd numbers of the sequence converges to a multiple of
 $3$.

\begin{exm} \label{reveresecollarzexamp} \hfill {\em

The Reverse Collatz sequence with starting number  $121$ is :
\begin{gather*}
121, 242, 484, 161, 322, 107, 214, 71, 142, 47, 94, 31, 62, 124, 41,
82, 27, 54, 108, 216, \dots
\end{gather*}
The Reverse Collatz sequence of $121$ converges because its
subsequence of odd numbers
\[
121, 161, 107, 71, 47, 31,  41, 27
\]
converges to $27$. 

}
\end{exm}

Let $p_i$ denote the subsequence of odd numbers in the Reverse Collatz
sequence of $A$. Then, in \cite{ahmed1}, it was proved that, if $p_i
\equiv 0$ mod $3$, then $p_{i+1}$ do not exist. Otherwise, $p_{i+1}$
is the smallest odd number before $p_i$ in any Collatz sequence and

\begin{equation} \label{Reversecollapieqn}
p_{i+1} = \left \{ \begin{array}{l}
\frac{2p_i-1}{3} \mbox{ if } p_i \equiv 2 \mbox{ mod } 3 \\ \\
\frac{4p_i-1}{3} \mbox{ if } p_i \equiv 1 \mbox{ mod } 3 
\end{array}
\right .
\end{equation}
 
It was conjectured in \cite{ahmed1}, that, the Reverse Collatz
sequence converges to a multiple of $3$ for every number greater than
one. See \cite{ahmed1} and \cite{ahmed2} for more details about
Reverse Collatz sequences.

\begin{lemma} \label{unwindAndReverseLemma}
Let $A$ be an odd number such that $A \equiv 2 \mod{3}$.  Let $u =
(A-2)/3$.  If $r_i$ denotes subsequence of odd numbers in the Reverse
Collatz sequence of $A$ with $r_0=A$, then $r_1 = 2u+1$. Consequently, 
\[u \equiv \left \{ \begin{array}{lllllll} 
0 \mod{3},  \implies  r_1 \equiv 1 \mod{3} \\
1 \mod{3},  \implies  r_1 \equiv 0 \mod{3} \\
2 \mod{3},  \implies  r_1 \equiv 2 \mod{3} \\
\end{array}
\right .
\]
Observe that $r_1 < r_0$. Moreover, if $u \equiv 2 \mod{3}$, let $t_i$
represent the subsequence of odd numbers in the Reverse Collatz
sequence of $u$, then $t_1= (r_1-2)/3$.
\end{lemma}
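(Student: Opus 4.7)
The plan is to apply equation (\ref{Reversecollapieqn}) directly and then do case analysis modulo $3$. Since $A \equiv 2 \pmod 3$, the first branch of (\ref{Reversecollapieqn}) gives $r_1 = (2r_0-1)/3 = (2A-1)/3$. Substituting $A = 3u+2$ yields $(2(3u+2)-1)/3 = (6u+3)/3 = 2u+1$, which establishes the first claim $r_1 = 2u+1$.

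For the congruence table, I would just read off $2u+1 \pmod 3$ for each residue class of $u$: $u\equiv 0$ gives $r_1\equiv 1$, $u\equiv 1$ gives $r_1\equiv 3\equiv 0$, and $u\equiv 2$ gives $r_1\equiv 5\equiv 2$. The inequality $r_1 < r_0$ is immediate from $r_1 = (2A-1)/3$, since $(2A-1)/3 < A$ is equivalent to $A > -1$, which holds for any positive odd $A$.

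For the final sentence, I would verify that $u$ is indeed odd (so the Reverse Collatz sequence $t_i$ starting at $u$ makes sense, i.e., the odd-subsequence statement applies): since $A = 3u+2$ is odd, $3u$ must be odd, hence $u$ is odd. Given $u\equiv 2 \pmod 3$, equation (\ref{Reversecollapieqn}) applied to $t_0=u$ produces $t_1 = (2u-1)/3$. On the other hand, $(r_1-2)/3 = (2u+1-2)/3 = (2u-1)/3$, so $t_1 = (r_1-2)/3$ as required.

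There is no real obstacle here; the lemma is a direct substitution exercise based on the closed form in equation (\ref{Reversecollapieqn}). The only thing to be slightly careful about is making sure $u$ is a positive odd integer so that the Reverse Collatz subsequence of odd numbers of $u$ is well-defined, which follows automatically from $A$ being odd and $A \equiv 2 \pmod 3$.
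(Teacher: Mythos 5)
Your proof is correct and follows essentially the same route as the paper: apply the $p_i \equiv 2 \bmod 3$ branch of Equation~(\ref{Reversecollapieqn}) to get $r_1 = (2A-1)/3$, substitute $A = 3u+2$, and repeat the same computation for $t_1$. The extra checks you include (that $u$ is odd, the explicit congruence table, and $r_1 < r_0$) are details the paper leaves to the reader, and they are all verified correctly.
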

\begin{proof}
 Since $A \equiv 2 \mod{3}$, by definition of Reverse
Collatz sequence, $r_1 = (2A-1)/3$.  Now 
\[2u+1 = 2 \left ( \frac{A-2}{3} \right ) + 1 = \frac{2A-1}{3}. \] 

Therefore, $r_1 = 2u+1$.  Consequently, \[ \frac{r_1 - 2}{3} =
\frac{(2u+1)-2}{3} = \frac{2u-1}{3}. \] If $u \equiv 2 \mod{3}$, then
by definition of Reverse Collatz sequence, $t_1 = (2u-1)/3$. Thus,
$t_1 = (r_1-2)/3$. 
\end{proof}

\begin{thm} \label{unwindthm}

Let $A$ be an odd number and let $A \equiv 2 \mod{3}$.  Define a
sequence of odd numbers, $v_{0,j}$, such that, $v_{0,0}=A$, and for
$j>0$, $v_{0,j} = (v_{0,j-1}-2)/3$.  Then, for some integer $n \geq
0$, $v_{0,j} \equiv 2 \mod{3}$ for $j<n$, and $v_{0,n} \not \equiv 2
\mod{3}$. Moreover, there are at least $n+1$ terms in the
subsequence of odd numbers in the Reverse Collatz sequence of
$v_{0,0}$ (by Part \ref{reverseItem}).  Let $v_{i,0}$, $i=0, \dots, n$, denote the first $n+1$
terms of the subsequence of odd numbers in the Reverse Collatz
sequence of $v_{0,0}$. Then, for each $i = 1, \dots, n$, we can form
an array $v_{i,j} = (v_{i,j-1}-2)/3$, where $j=1, \dots, n-i$,

 \[\begin{array}{lllllllllllllllll}
v_{0,0} & v_{0,1} & v_{0,2} & v_{0,3} &  \dots &  v_{0,n-2} & v_{0,n-1}& v_{0,n}  \\
v_{1,0} & v_{1,1} & v_{1,2} & v_{1,3} &  \dots &  v_{1,n-2} & v_{1,n-1} \\
v_{2,0}& v_{2,1}& v_{2,2} & v_{2,3} &   \dots   & v_{2,n-2}       \\

\vdots& \vdots& \vdots& \vdots   \\ 
v_{n-3,0}&v_{n-3,1}& v_{n-3,2}& v_{n-3,3}   \\
v_{n-2,0}&v_{n-2,1}& v_{n-2,2}\\
v_{n-1,0}&v_{n-1,1}   \\
v_{n,0}

\end{array}
\]

with the following properties.

\begin{enumerate}

\item \label{reverseItem} For each $j=0, \dots n$, $v_{i,j}$, $i=0,1,
  \dots, n-j$, are the first $n+1-j$ terms of the subsequence of odd
  numbers in the Reverse Collatz sequence of $v_{0,j}$. For $i=0,
  \dots, n$, $v_{i,j} \equiv 2 \mod{3}$ whenever $j \not = n-i$, and
  $v_{i,n-i} \not \equiv 2 \mod{3}$.

\item \label{partrelate} For $i>0$, $v_{i,j} = 2*v_{i-1,j+1}+1$.  Moreover, if $v_{0,n}
  \equiv 1 \mod{3}$ then,
\[
v_{i,n-i} \equiv  \left \{
\begin{array}{llllllllll}

 0 \mod{3} &  \mbox{  if $i$ is odd; } \\
 1 \mod{3} &  \mbox{  if $i$ is even.}

\end{array}
\right .
\]
On the other hand, if $v_{0,n} \equiv 0 \mod{3}$ then,
\[
v_{i,n-i} \equiv  \left \{
\begin{array}{llllllllll}

 1 \mod{3} &  \mbox{  if $i$ is odd; } \\
 0 \mod{3} &  \mbox{  if $i$ is even.}

\end{array}
\right .
\]

\item  \label{startEndPart}  $v_{0,0} = 3^n \left( v_{0,n}+1 \right )-1$  and  $v_{n,0} = 2^n(v_{0,n}+1)-1$.  Moreover, for $i=1, \dots, n$,
$v_{i,0} = 3^{n-i} \times 2^i \times  \left( v_{0,n}+1 \right )-1$.

\end{enumerate}

\end{thm}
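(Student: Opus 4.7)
My plan is to guess and verify a single closed form for every entry of the triangle, namely
\[
v_{i,j} \;=\; 3^{\,n-i-j}\,2^{\,i}\,(v_{0,n}+1)\;-\;1,\qquad 0\le i\le n,\ 0\le j\le n-i,
\]
and then read all three parts of the theorem off this formula. The column index $j$ contributes a factor of $3$ per unwind step $A\mapsto(A-2)/3$, and the row index $i$ contributes a factor of $2$ per Reverse Collatz next-odd step $A\mapsto(2A-1)/3$; once both have been propagated, every claim of the theorem reduces to a short algebraic check.

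First I would establish that $n$ is well-defined. If $v_{0,j-1}$ is odd with $v_{0,j-1}\equiv 2\pmod 3$, writing $v_{0,j-1}=3k+2$ forces $k$ odd, so $v_{0,j}=k$ is again a positive odd integer and is strictly smaller; since a strictly decreasing sequence of positive odd integers cannot continue forever, there is a smallest $n\ge 0$ with $v_{0,n}\not\equiv 2\pmod 3$. Iterating the recurrence $v_{0,j-1}=3v_{0,j}+2$ from $j=n$ back down to $j=0$ then gives the $i=0$ case of the closed form, $v_{0,j}=3^{n-j}(v_{0,n}+1)-1$, and in particular shows $v_{0,j}\equiv -1\equiv 2\pmod 3$ for every $j<n$.

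Next I would induct on $i$ to pin down the leftmost column $v_{i,0}$. The base case $i=0$ is the formula just derived. For the inductive step, the closed form gives $v_{i-1,0}\equiv 2\pmod 3$ as long as $i-1<n$, so Lemma \ref{unwindAndReverseLemma} applies and $v_{i,0}=(2v_{i-1,0}-1)/3$; substituting and simplifying yields $v_{i,0}=3^{n-i}2^i(v_{0,n}+1)-1$. The row recursion $v_{i,j}=(v_{i,j-1}-2)/3$ then propagates the closed form across each row, and at every step $v_{i,j-1}\equiv 2\pmod 3$ because the exponent of $3$ in the closed form is positive, so the division by $3$ is legitimate. With the closed form established throughout the triangle, Part \ref{startEndPart} is the specialization $j=0$; the identity $v_{i,j}=2v_{i-1,j+1}+1$ of Part \ref{partrelate} is a one-line algebraic check; and the Reverse Collatz interpretation of the $j$-th column in Part \ref{reverseItem} follows by repeating the very same induction verbatim with $v_{0,j}$ replacing $v_{0,0}$.

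The one point that genuinely requires care is making sure every Reverse Collatz step along a row uses the $(2p-1)/3$ branch rather than the $(4p-1)/3$ branch of equation (\ref{Reversecollapieqn}). This is exactly the content of the congruence $v_{i,j}\equiv 2\pmod 3$ for $j<n-i$, which the closed form delivers automatically. The remaining diagonal congruences in Part \ref{partrelate} are then a short case analysis, obtained by evaluating $v_{i,n-i}=2^i(v_{0,n}+1)-1$ modulo $3$ using $2\equiv -1\pmod 3$ together with the two possible residues of $v_{0,n}$ and the parity of $i$.
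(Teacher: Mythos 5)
Your proof is correct, and it reaches the theorem by a genuinely different (and tidier) route than the paper. The paper's argument for Part \ref{reverseItem} is a nested induction driven by Lemma \ref{unwindAndReverseLemma}, which is invoked over and over to show that the unwind step $A\mapsto(A-2)/3$ and the Reverse Collatz step $p\mapsto(2p-1)/3$ commute one cell at a time; only afterwards, in Part \ref{startEndPart}, does the paper telescope two separate recurrences to get $v_{i,0}=3^{n-i}(v_{i,n-i}+1)-1$ and $v_{i,n-i}=2^i(v_{0,n}+1)-1$. You instead posit the single global formula $v_{i,j}=3^{\,n-i-j}2^{\,i}(v_{0,n}+1)-1$, verify it by two one-dimensional inductions (down the first column via Equation \eqref{Reversecollapieqn}, then across each row), and read off all three parts: the congruences $v_{i,j}\equiv 2\pmod 3$ for $j<n-i$ come for free from the positive exponent of $3$, the identity $v_{i,j}=2v_{i-1,j+1}+1$ and the diagonal residues of Part \ref{partrelate} become one-line checks, and Part \ref{startEndPart} is the specialization $j=0$. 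The consistency of the array (that each column really is a Reverse Collatz subsequence) becomes a corollary of the formula rather than the engine of the proof, which makes the argument shorter and easier to audit; the paper's version, by contrast, isolates the commutation phenomenon in Lemma \ref{unwindAndReverseLemma} as the conceptual mechanism. You also supply something the paper omits: a justification that $n$ exists, via the observation that the unwind sequence is a strictly decreasing sequence of positive odd integers. The one place to be slightly more explicit is the final clause of your third paragraph --- when you ``repeat the induction verbatim'' down the $j$-th column you should note that the Reverse Collatz successor of $v_{i-1,j}$ computed from Equation \eqref{Reversecollapieqn} lands exactly on the row-recursion value $v_{i,j}$, which is again an immediate consequence of the closed form; with that sentence spelled out the proof is complete.
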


{\em Proof.}  

\begin{enumerate}
\item By Lemma \ref{unwindAndReverseLemma}, since $v_{0,1} \equiv 2
  \mod{3}$, $v_{1,0} \equiv 2 \mod{3}$, and $v_{1,1}$ is the next odd
  term in the subsequence of odd numbers in the Reverse Collatz
  sequence of $v_{0,1}$.  Now, because $v_{1,0} \equiv 2 \mod{3}$,
  $v_{2,0}$ is well defined as the next term in the Reverse Collatz
  sequence of $v_{1,0}$, by Equation \ref{Reversecollapieqn}.  Since,
  $v_{0,2} \equiv 2 \mod{3}$, we apply Lemma
  \ref{unwindAndReverseLemma}, again, to conclude that $v_{1,1} \equiv
  2 \mod{3}$, and $v_{1,2}$ is the next odd term in the subsequence of
  odd numbers in the Reverse Collatz sequence of $v_{0,2}$.

 Applying this argument, repeatedly, we derive that for each $j=1,
 \dots, n-1$, $v_{1,j}$ is the first odd term that comes after
 $v_{0,j}$ in the Reverse Collatz sequence of $v_{0,j}$.  We also get
 that $v_{1,j} \equiv 2 \mod{3}$, for $j=1, \dots, n-2$.  Since
 $v_{0,n} \not \equiv 2 \mod{3}$, $v_{1,n-1} \not \equiv 2 \mod{3}$,
 by Lemma \ref{unwindAndReverseLemma}.

Now, since we established that $v_{1,1} \equiv 2 \mod{3}$, we will
repeat the above argument to derive that for each $j=1, \dots, n-2$, 
$v_{2,j}$ is the first odd term that comes after $v_{1,j}$ in the
Reverse Collatz sequence of $v_{1,j}$.  We also get that $v_{2,j}
\equiv 2 \mod{3}$, for $j=1, \dots, n-3$, and  $v_{2,n-2} \not \equiv 2 \mod{3}$.

Continuing thus, we get for each $j=0, \dots n$, $v_{i,j}$, $i=0,1,
\dots, n-j$, are the first $n+1-j$ terms of the subsequence of odd
numbers in the Reverse Collatz sequence of $v_{0,j}$.  For $i=0,
\dots, n$, $v_{i,j} \equiv 2 \mod{3}$ whenever $j \not = n-i$, and
$v_{i,n-i} \not \equiv 2 \mod{3}$. 

\item This result follows from Lemma \ref{unwindAndReverseLemma}.

\item  Rewriting $v_{i,j} = (v_{i,j-1}-2)/3$, we get
  $v_{i,j-1} = 3v_{i,j}+2$.  Thus,
\[v_{i,0} = 3v_{i,1}+2 = 3(3(v_{i,2}+2)+2 = 3^2v_{i,2}+ 3\times2 + 2. \]
Continuing thus we get $v_{i,0} = 3^{n-i} \times v_{i,n-i} + 2 \times \sum_{s=0}^{n-i-1} 3^s$. Since 
\[
2 \times \sum_{s=0}^{n-i-1} 3^s = 3^{n-i}-1,
\] 
we get \[ v_{i,0} = 3^{n-i}  (v_{i,n-i}+1) -1. \] 

 By Part \ref{partrelate}, $v_{i,j} = 2*v_{i-1,j+1}+1$, for $i>0$.
 Therefore,
\[
v_{i,n-i} =  2*v_{i-1,n-i+1}+1 = 2*(2*v_{i-2,n-i+2}+1)+1
\]
Continuing thus we get
\[ v_{i,n-i} = 2^i \times v_{0,n} + \sum_{s=0}^{i-1} 2^s. \]
Substituting $\sum_{s=0}^{i-1} 2^s = 2^i - 1$, we get $v_{i,n-i} = 2^i
(v_{0,n} + 1) -1$. In particular, $v_{0,0} = 3^n \left( v_{0,n}+1
\right )-1$ and $v_{n,0} = 2^n(v_{0,n}+1)-1$.  

Since $v_{0,0} = 3^n \left( v_{0,n}+1 \right )-1$ and $v_{0,0} \equiv
2 \mod{3}$, $v_{1,0} = 3^{n-1} \times 2 \times \left( v_{0,n}+1 \right
)-1$, by Equation \ref{Reversecollapieqn}. By Part \ref{reverseItem},
$v_{1,0} \equiv 2 \mod{3}$. Therefore, again, by Equation
\ref{Reversecollapieqn}, we derive $v_{2,0} = 3^{n-2} \times 2^2 \times \left(
v_{0,n}+1 \right )-1$. Repeating this argument, we get, for $i=1,
\dots, n$, $v_{i,0} = 3^{n-i} \times 2^i \times \left( v_{0,n}+1
\right )-1$.

\qed
\end{enumerate}

\begin{exm}
{\em In this example, we apply Theorem \ref{unwindthm} to $A=2429
  \equiv 2 \mod{3}$. Here, $n=5$, $v_{0,5} = 9$,   The columns
  are the first odd terms in the Reverse Collatz sequence of
  $v_{0,j}$.  Observe that $v_{0,5}=9, v_{2,3}=39, v_{4,1}= 159$ are
  $\equiv 0 \mod{3}$ and $ v_{1,4}=19$, $v_{3,2}=79$, and $v_{5,0}=319$
  are $\equiv 1 \mod{3}$. $v_{0,0} = 2429 = 3^5
  \times 10 -1$, and $v_{5,0} = 2^5 \times 10 -1 = 319$.

\begin{adjustwidth}{-1in}{.5in}

\[ \begin{array}{llllllllllllllllll}
 v_{0,j}: & 2429  & 809 & 269 & 89 & 29 & 9  \\
v_{1,j}: &  1619  & 539  & 179 & 59 & 19 \\
v_{2,j}: &1079  & 359  & 119 & 39    \\
v_{3,j}: &719  & 239 & 79   \\
v_{4,j}: &479 &  159  \\
v_{5,j}: &319  

\end{array}
\]
\end{adjustwidth}

}
\end{exm}

\begin{lemma} \label{3n+1lemma}
Let $A \equiv 1 \mod{3}$ be an odd number. Then we can write $A = 3^n
B + 1$ such that $B$ is not divisible by $3$. If $r_i$ denotes the subsequence of odd numbers in the 
Reverse Collatz sequence of $A$, then, for $i=0, \dots n$, $r_i = 4^i \times 3^{n-i} \times B + 1$.
Thus, $r_n = 4^n \times  B + 1$. 
For $i=0, \dots n-1$, $r_i  \equiv 1  \mod{3}$, and
\[
r_n  \equiv \left \{ \begin{array}{lllllllllll}

2  \mod{3},  & \mbox{ if $B \equiv 1 \mod{3}$,} \\
0  \mod{3},  & \mbox{ if $B \equiv 2 \mod{3}$.} \\

\end{array}
\right .
\]
Observe that $r_i > r_{i-1}$ for $i=1, \dots, n$.

\end{lemma}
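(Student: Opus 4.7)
The plan is to verify the explicit formula $r_i = 4^i \times 3^{n-i} \times B + 1$ by induction on $i$, and then to read off the remaining assertions as immediate corollaries. First I would note that $B$ is well defined: since $A \equiv 1 \mod{3}$, the exponent $n$ may be taken as the $3$-adic valuation of $A-1$, so that $B = (A-1)/3^n$ is an integer with $\gcd(B,3)=1$, and the base case $r_0 = A = 3^n B + 1$ matches the formula for $i=0$.

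For the inductive step, assume $r_i = 4^i \times 3^{n-i} \times B + 1$ with $i < n$. Because $n-i \geq 1$, the summand $4^i \times 3^{n-i} \times B$ is divisible by $3$, hence $r_i \equiv 1 \mod{3}$, and the second branch of Equation \ref{Reversecollapieqn} applies and yields $r_{i+1} = (4r_i - 1)/3$. A direct simplification
\[
\frac{4\bigl(4^i \times 3^{n-i} \times B + 1\bigr) - 1}{3} = \frac{4^{i+1} \times 3^{n-i} \times B + 3}{3} = 4^{i+1} \times 3^{n-i-1} \times B + 1
\]
closes the induction and, at $i=n$, gives $r_n = 4^n B + 1$.

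For the residue of $r_n$, since $4 \equiv 1 \mod{3}$ we get $r_n \equiv B+1 \mod{3}$, splitting into the two stated cases according as $B \equiv 1$ or $2 \mod{3}$. Strict monotonicity reduces to comparing leading terms: $4^i \times 3^{n-i}$ divided by $4^{i-1} \times 3^{n-i+1}$ equals $4/3 > 1$, so each $r_i$ strictly exceeds $r_{i-1}$ while the $+1$ constant is common to both.

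I do not anticipate any real obstacle; the whole argument is a one-line recurrence check. The only subtlety worth flagging is that one must be sure to invoke the correct branch of the Reverse Collatz rule at every inductive step, but this is automatic since the formula transparently exhibits $r_i \equiv 1 \mod{3}$ for each $i < n$, and the residue classification at $i = n$ is then read directly off $r_n = 4^n B + 1$.
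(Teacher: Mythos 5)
Your proposal is correct and follows essentially the same route as the paper: both verify the closed form $r_i = 4^i\times 3^{n-i}\times B+1$ by iterating Equation \ref{Reversecollapieqn} (the paper does this step-by-step with ``continuing this argument,'' you phrase it as a clean induction), then read off the residues of $r_n$ from $4\equiv 1\bmod 3$. Your explicit checks of the well-definedness of $B$ via the $3$-adic valuation and of the monotonicity $r_i>r_{i-1}$ are small additions the paper leaves implicit, but they do not change the argument.
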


{\em Proof.} Since, $A \equiv 1 \mod{3}$, by Equation
\ref{Reversecollapieqn}, we get $r_1 = (4A-1)/3 = 4 \times 3^{n-1}B +
1$.  If $n>1$ then $r_1 \equiv 1 \mod{3}$. Again, by Equation
\ref{Reversecollapieqn}, $r_2 = 4^2 \times 3^{n-2}B + 1$. If $n>2$
then $r_2 \equiv 1 \mod{3}$. Continuing this argument, we get, for
$i=0, \dots n-1$, $r_i = 4^i \times 3^{n-i} \times B + 1$, such that,
$r_i \equiv 1 \mod{3}$. Now since $r_{n-1} \equiv 1 \mod{3}$, we get
$r_n = 4^n \times B + 1$.  Since $B \not \equiv 0 \mod{3}$,  $r_n \not \equiv 1 \mod{3}$. In fact, 
$r_n  \equiv 2  \mod{3}$,   if $B \equiv 1 \mod{3}$, and $r_n  \equiv 0  \mod{3}$,   if $B \equiv 2 \mod{3}$. \qed

\begin{exm}
{\em We apply Lemma \ref{3n+1lemma} to $A = 91$. We can write $A = 3^2
  \times 10 + 1$.  Let $r_i$ denote the subsequence of odd numbers in
  the Reverse Collatz sequence of $A$. Then,
\begin{adjustwidth}{-1in}{.5in}

\[ \begin{array}{llllllllllllllllll}

r_0= 91 = 3^2 \times 10 + 1 \\
r_1 = 121 = 4 \times 3 \times 10 + 1 \\
r_2 = 161 = 4^2 \times 10 + 1 \equiv 2 \mod{3}.
\end{array}
\]
\end{adjustwidth}
Since $10 \equiv 1 \mod{3}$, $r_2  \equiv 2  \mod{3}$.
}
\end{exm}

\begin{exm}
{\em In this example, we demonstrate the convergence of the Reverse
  Collatz sequence of $2429$.  By Part \ref{startEndPart} of Theorem
  \ref{unwindthm}, $v_{0,0} = 3^n \left( v_{0,n}+1 \right
  )-1$. Rewriting, we get $(v_{0,0}+1)/3^n = v_{0,n}+1$.  Since
  $(2429+1)/3^5 = 10$, and $10 \not \equiv 0 \mod{3}$, we get $v_{0,5}
  = 9$.  By Part \ref{partrelate} of Theorem \ref{unwindthm}, Since $9
  \equiv 0 \mod{3}$, $v_{5,0} \equiv 1 \mod{3}$.  Now $v_{5,0} = 319 =
  3 \times 106 + 1$, therefore. $v_{6,0} = 4 \times 106 + 1 = 425$, by
  Lemma \ref{3n+1lemma}.  Also, since, $106 \equiv 1 \mod{3}$, $v_{6,0}
  \equiv 2 \mod{3}$.  Since $426/3 = 142$, we get $v_{6,0} = 3 \times
  142 -1$, by Part \ref{startEndPart} of Theorem
  \ref{unwindthm}. Since $142 \equiv 1 \mod{3}$, $v_{7,0} \equiv 2
  \mod{3}$, by Lemma \ref{3n+1lemma}.  Continuing this argument, we
  see that subsequence of odd integers of the Reverse Collatz sequence
  of $2429$ fluctuates between numbers that are $\equiv 2 \mod{3}$ and
  $ 1 \mod{3}$, till it reaches $111 \equiv 0 \mod{3}$.

\begin{adjustwidth}{-1in}{.5in}

\[ \begin{array}{llllllllllllllllll}
v_{0,0}= 2429 =  3^5 \times 10 -1 \\
v_{1,0}= 1619  = 3^4 \times 2 \times  10 -1  \\
v_{2,0} = 1079  = 3^3 \times 2^2 \times  10 -1  \\
v_{3,0}=719 = 3^2 \times 2^3 \times  10 -1  \\ 
v_{4,0} = 479  = 3 \times 2^4 \times  10 -1  \\  
v_{5,0} =319  = 2^5 \times 10 -1 = 3 \times 106 + 1 \\
v_{6,0} =425 = 4 \times 106 + 1 = 3 \times 142 -1 \\
v_{7,0} =283 = 4 \times 142 -1 = 3 \times 94 -1 \\
v_{8,0} =377  = 4 \times 94 -1 = 3^3 \times 14 -1   \\
v_{9,0} =251 = 3^2 \times 2 \times 14 -1  \\
v_{10,0} =167 = 3 \times 2^2 \times 14 -1  \\
v_{11,0} = 111= 2^3 \times 14 -1  
\end{array}
\]
\end{adjustwidth}

}
\end{exm}

Thus, we see that the odd numbers of a Reverse Collatz sequence, keep
alternating between numbers that are congruent to $1 \mod{3}$ and $2
\mod{3}$, till it reaches a number that is divisible by $3$.  Which
also means the sequence increases and decreases at regular intervals.
Does this sequence converge? Or does it alternate forever? We cannot
answer this question yet. 

A Reverse Collatz sequence will continue till it reaches a number $A$
that is a multiple of $3$.  Now if $A$ is a multiple of $3$, then
$4A+1 \equiv 1$ mod $3$. So the Reverse Collatz sequence of $4A+1$ is
non trivial. Moreover, the Collatz sequences of $A$ and $4A+1$ are
equivalent. Thus, a Collatz sequence can be extended backwards forever
using jumps as in Example \ref{exmreverse}. A sequence containing
infinite terms is divergent.

\begin{exm} \label{exmreverse} {\em A Collatz sequence can be extended backwards forever using jumps!
 {\supertiny
\[ \begin{array}{rrrrrrrrrrrr}
&&&&  204729 \\ 
&&&& 153547 \\ 
&&&& 230321 \\ 
 &&&&  4 \times 43185 + 1 = 172741 \\
  &&&& \Uparrow \\
&&&&  43185 \\ 
 &&&  8097  & \Rightarrow 4 \times 8097 + 1 = 32389 \\ 
 &&&  6073 \\ 
 &&&  4555 \\ 
 &&&   6833 \\  
&& 1281  
 & \Rightarrow 4 \times 1281 + 1 = 5125 \\ 
&&  961 \\
&&721 \\
  &&  4 \times 135 + 1 = 541  \\
  && \Uparrow \\
 && 135 \\  
 &&   203 \\  
&&  305 \\
& &4 \times 57 + 1 = 229 \\                                           
& & \Uparrow \\
 && 57   \\  
 && 43   \\       
&&  65 \\
&& 49 \\
 &  9   & \Rightarrow 4 \times 9 + 1 = 37  \\
 &   7   \\
 &11   \\                       
  & 17 \\

  &  4 \times 3 + 1 = 13  \\
  & \Uparrow \\
 & 3 &  \\
1 &  \Rightarrow  4 \times 1 + 1 = 5 \\

\end{array}
\] }
} \end{exm}

\begin{thm} \label{falsethm} For any odd integer $A$, there are infinite Collatz sequences  that do not converge. 
\end{thm}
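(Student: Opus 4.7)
The plan is to exploit the Reverse Collatz machinery of Theorem \ref{unwindthm} and Lemma \ref{3n+1lemma}, together with the jump construction of Lemma \ref{lemmajump}, to build for any odd $A$ an infinite backward chain of odd predecessors. Such a chain corresponds to a Collatz sequence containing infinitely many terms before reaching $A$, which is by definition divergent; and varying the choices made along the way will yield infinitely many distinct such sequences.

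First I would reduce to the case that the current odd number is not a multiple of $3$: if $A \equiv 0 \mod{3}$, replace $A$ with the jump $4A+1$, which is $\equiv 1 \mod{3}$ and, by Lemma \ref{lemmajump}, has a Collatz sequence equivalent to that of $A$. Next I would run the Reverse Collatz sequence of the current number. By Theorem \ref{unwindthm} when it is $\equiv 2 \mod{3}$, and by Lemma \ref{3n+1lemma} when it is $\equiv 1 \mod{3}$, the resulting subsequence of odd numbers is strictly increasing and oscillates between residues $1$ and $2$ modulo $3$, eventually terminating at some strictly larger odd number $M \equiv 0 \mod{3}$.

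At this termination point I would apply another jump $M \mapsto 4M+1$, whose Collatz equivalence to $M$ is again guaranteed by Lemma \ref{lemmajump}, and whose residue $1 \mod{3}$ makes the next Reverse Collatz run non-trivial. Iterating this ``unwind, then jump'' procedure produces, exactly as illustrated in Example \ref{exmreverse}, an infinite sequence of strictly increasing odd predecessors of $A$. Each predecessor reaches the previous one in finitely many Collatz steps, so the composite forward process is a Collatz sequence containing infinitely many odd terms, hence divergent, and passing through $A$. Because at every multiple of $3$ encountered we may equally well choose a higher-height jump $M \mapsto 4^{k}M + (4^{k-1}+\cdots+4+1)$, which by Lemma \ref{lemmajump} is still Collatz-equivalent to $M$, the branching at each stage supplies infinitely many distinct divergent Collatz sequences containing $A$.

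The main obstacle I foresee is making rigorous the passage from ``arbitrarily long backward chains can be constructed'' to ``a single genuinely infinite forward Collatz sequence exists.'' The construction literally delivers, for every $N$, a starting number whose Collatz orbit visits at least $N$ prescribed odd values before reaching $A$; to assemble these into one infinite sequence one must invoke the informal principle stated just before the theorem (``a sequence containing infinite terms is divergent'') and argue that the nested backward chain is a well-defined set of predecessors of $A$, rather than merely an inverse system with no limit point. I would handle this by presenting the backward chain as an explicit increasing sequence $A = A_0 < A_1 < A_2 < \cdots$ of odd integers, each reaching its predecessor in finitely many Collatz steps, and then defining the divergent forward Collatz sequence to be the concatenation of these finite Collatz segments, which has no last term and therefore cannot converge to $1$.
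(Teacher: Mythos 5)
Your construction follows essentially the same route as the paper's own proof: use Lemma \ref{lemmajump} to jump off every multiple of $3$, use the Reverse Collatz recursion (Equation \ref{Reversecollapieqn}) to walk backwards to the next multiple of $3$, and iterate, exactly as in Example \ref{exmreverse}. You also correctly isolate the critical step --- passing from an infinite backward chain to a single divergent forward Collatz sequence --- but the fix you propose does not close that gap, and the gap is fatal (it is equally present in the paper's version). A Collatz sequence, as defined in the Introduction, is determined by a starting number $c_0=A$ and runs forward; the ``concatenation'' of your backward segments has no first term, so it is not the Collatz sequence of any integer. What the construction actually delivers is, for each $N$, an odd number $A_N$ whose forward orbit reaches $A$ after finitely many steps. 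Each of these is an ordinary Collatz sequence that reaches $A$, hence reaches $1$ whenever the orbit of $A$ does; none of them is divergent, and ``arbitrarily long before reaching $A$'' never becomes ``infinite.'' Indeed, the identical argument applied to $A=1$ (the bottom of Example \ref{exmreverse} extends backwards from $1$ through $5, 3, 13, 17, \dots$) would ``prove'' the existence of divergent sequences through $1$, yet every number in that backward tree demonstrably converges. To exhibit a divergent Collatz sequence one must produce a single starting value whose forward orbit never reaches $1$, and the construction never produces one.

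Two smaller points. The backward chain is not strictly increasing as you assert: by Lemma \ref{unwindAndReverseLemma}, the reverse step at a number $\equiv 2 \bmod 3$ is $r_1=(2r_0-1)/3<r_0$, so the chain oscillates (compare the $2429$ example, where $319, 425, 283, 377, \dots$ goes up and down). Also, your claim that each Reverse Collatz run ``eventually terminates at some odd $M\equiv 0 \bmod 3$'' is precisely the convergence conjecture for Reverse Collatz sequences, which the paper itself leaves open (``We cannot answer this question yet''); the jump-based branching does not actually require it, but as written your argument leans on an unproved statement.
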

{\em Proof.} Given an odd integer $A$, consider the sequence of jumps
$b_i = 4b_{i-1}+1$ with $b_0=A$. This is an infinite sequence with
equivalent Collatz sequences by Lemma \ref{lemmajump}. If for any $i$,
$b_i \equiv 0$ mod $3$, then $b_{i+1} \equiv 1$ mod $3$, $b_{i+2}
\equiv 2$ mod $3$, and $b_{i+3} \equiv 0$ mod $3$. Which implies there
are infinite jumps for any number $A$ which are not multiples of $3$.
The reverse Collatz sequences of these jumps are non trivial.  Hence,
there are infinite ways to go backwards. Moreover, as in Example
\ref{exmreverse}, these sequences can be extended backwards
infinitely. All these sequences have infinite terms and hence are
divergent. \qed

By Theorem \ref{falsethm}, the Collatz Conjecture is false. End of story.

\end{document}